\newcommand{\comma}{,}
\author{Jakob Oesinghaus}
\address{
  Jakob Oesinghaus,
  Institut f\"ur Mathematik,
  Universit\"at Z\"urich,
  Winterthurerstrasse 190,
  8057 Z\"urich,
  Switzerland,
  jakob.oesinghaus@math.uzh.ch
}
\title{Geometric Brauer residue via root stacks}
\begin{document}
\begin{abstract}
We reinterpret the residue map for the Brauer group of a smooth variety using a root stack construction
and Weil restriction for algebraic stacks, and apply the result to find a geometric representative of the
residue of the Brauer class associated to a conic bundle.
\keywords{Brauer groups \and algebraic stacks}
\subjclass{14F22 \and 14A20}
\end{abstract}
\maketitle

\tableofcontents

\section{Introduction}
Consider a variety $S$ over a field $K$, and let $n$ be a positive integer not
divisible by the characteristic of $K$. The \emph{Brauer group} $\Br(S)$ is a classical invariant
studied, among other things, in the context of rationality questions, since it is a 
birational invariant for smooth projective varieties. More precisely, it can be shown that
for a \emph{smooth projective} variety $X$ over an algebraically closed field,
the $n$-torsion part of the Brauer group $\Br(X)[n]$ agrees with the group of
\emph{unramified} $n$-torsion elements of the Brauer group of the function field, defined as
\[
\bigcap_{R \subset K(X)} \Ker \big(\Br(K(X))[n] \to H^1(\kappa_R, \ZnZ) \big),
\]
where the intersection runs over all (rank one) discrete valuation rings
$R\subset K(X)$ such that $K\subset R$, and the map
$\Br(K(X))[n] \to H^1(\kappa_R, \ZnZ)$ is the residue map for this discrete
valuation ring.

Let $S$ be a smooth variety over $K$. 
Then, for an irreducible divisor $S$, the
residue map (\cite[Prop.\ 2.1]{GB3})
\[
\res \colon \Br(K(S)) [n] \to H^1(K(D), \ZnZ), 
\]
measures the ramification of this Brauer group element along the divisor.
For example, if $\alpha \in \Br(K(S))[n]$ is the class of a Brauer-Severi scheme
of relative dimension $n-1$ over an open subset of $S$ which arises as the restriction
of a flat bundle on all of $S$, then the residue map will yield some information about the
degeneration of this bundle along the boundary (cf \cite{ArtinLeftIdeals}).
In general, the map $\res$ is hard to compute explicitly. Root stacks are well-adapted to residue calculations, since
in the setting of a discrete valuation ring, every $n$-torsion Brauer class over the generic point extends uniquely
to the $n$-fold root stack along the closed point.

We use the root stack construction over a discrete valuation ring to reinterpret the residue map
in terms of a canonical decomposition of the Brauer group of this root stack
(Theorem \ref{thm.residue-rootstack}), and use results on Weil restriction for the
gerbe of the root stack, which is just $B\mu_n$ over the residue field of the DVR, to give a
representative, in terms of a $\ZnZ$-torsor, for the residue class in this situation
(Proposition \ref{prop.weilres}).

We then apply this result in the geometric situation of a Brauer class associated to
a bundle whose generic fiber is a form of projective space.
We show that via the operations of Weil restriction and coarse moduli space of
an algebraic stack, the residue of the Brauer class along a divisor arises geometrically
from the $\bP^{n-1}$-bundle associated with the restriction of the Brauer class to the gerbe of the root stack
in Proposition \ref{prop.brauerseveri}.

As a special case, we recover a classical result by Artin(\cite{ArtinLeftIdeals}) using different methods.

\subsubsection*{Acknowledgements}
I would like to thank my advisor Andrew Kresch for the
inspiration for this note, and for me assisting me tirelessly with the technical details. I also want to thank
the anonymous referee for very helpful feedback that helped me improve the exposition.
I am supported by Swiss National Science Foundation grant 156010.

\vspace{1em}

\section{The residue map and root stacks}
Unless mentioned otherwise, all cohomology groups are \'etale cohomology groups.
We define the (cohomological) \emph{Brauer group} $\Br(X)$
of a Noetherian Deligne-Mumford stack $X$ (\cite{DeligneMumford}, \cite[IV]{MilneEtale}) to be
the torsion group $H^2(X, \bG_m)_\tors$.

Let $R$ be a DVR with fraction field $K$ and residue field $\kappa$, and let $n>0$ be an integer
not divisible by $\characteristic(\kappa)$. In this situation, by \cite[Prop.\ 2.1]{GB3}, we have
the $n$-torsion residue map
\begin{equation}
\label{eqn.residue}
\res \colon \Br(K) [n] \to H^1(\Spec(\kappa), \ZnZ),
\end{equation}
which is part of an exact sequence
\[
0 \to \Br(R)[n] \to \Br(K)[n] \to H^1(\Spec(\kappa), \ZnZ) \to 0.
\]
We recall the construction of the map $\res$. Using the Leray spectral sequence for
the inclusion $i: \Spec(K) \to \Spec(R)$ and the fact that $R^\ell i_* \bG_m [n] = 0$
for $\ell > 0$,
one deduces an exact sequence
\[
0 \to \Br (R)[n] \to \Br (K)[n] \to H^2(\Spec(R), j_* \bZ)[n] \to 0,
\]
where $j$ is the inclusion of the closed point $\Spec(\kappa)$.
Finally, a short exact sequence argument shows that
\[
H^2(\Spec(R), j_* \bZ)[n] = H^2(\Spec(\kappa), \bZ)[n] = H^1(\Spec(\kappa), \ZnZ),
\]
which establishes the residue map.

In the same situation as before, we now construct another morphism
\[
\Br(K) [n] \to H^1(\Spec(\kappa), \ZnZ)
\]
via root stack methods(\cite{GromovWittenDM,CadmanTangency}).
Concretely, the $n$-th root stack of $\Spec(R)$ along $\Spec(\kappa)$ is the Deligne-Mumford stack
\[
X = \sqrt[n]{(\Spec(R), \Spec(\kappa))} := [\Spec(R[T]/(T^n-\pi))/\mu_n] \to \Spec(R),
\]
which has the property that all $n$-torsion elements of the Brauer group of $K$ lift to it
\cite[\S 3.2]{PeriodIndexBrauerArithmeticSurface}:
\begin{equation}
\label{eqn.root-iso}
\Br(K) [n] = \Br(X)[n].
\end{equation}

The root stack is an isomorphism over $\Spec (K)$,
and the complement of $\Spec (K)$ in the root stack is the \emph{gerbe of the root stack},
a closed substack of $X$ mapping to $\Spec(\kappa)$, which is isomorphic to the 
classifying stack 
\[
q: B\mu_{n,\kappa} \to \Spec(\kappa).
\]
We can apply the Leray spectral sequence for $q$
and the sheaf $\bG_m$, the fact that $q$ has a section, and the fact that the $E_2^{0,2}$-term
of this spectral sequence vanishes, to produce a decomposition of
$H^2(B\mu_{n,\kappa}, \bG_m) = \Br(B\mu_{n,\kappa})$
as
\begin{equation}
\label{eqn.gerbe-brauer}
\Br(B\mu_{n,\kappa}) \cong \Br(\kappa) \oplus H^1(\Spec(\kappa),  \ZnZ),
\end{equation}
where the projection to the first summand arises from the section of $q$, and the projection to the second summand
is the morphism
\[
H^2(B\mu_{n,\kappa}, \bG_m) \to H^1(\Spec(\kappa), R^1q_*\bG_m) = H^1(\Spec(\kappa), \ZnZ)
\]
arising from the Leray spectral sequence.

\section{Main results}
Our main theorem shows how the two constructions from the previous section are related.

\begin{theorem}
\label{thm.residue-rootstack}
The residue map \eqref{eqn.residue} agrees with the composition of the isomorphism \eqref{eqn.root-iso},
restriction to $B\mu_{n,\kappa}$, isomorphism \eqref{eqn.gerbe-brauer}, projection to the second factor in the
direct sum decomposition, and multiplication by $(-1)$.
Concretely, let $\alpha \in \Br(K)[n]$, and extend it to an element $\bar{\alpha} \in \Br(X)[n]$. Then
\begin{equation}
\label{eqn.residue-projection}
\res(\alpha) = - \pr_2(\bar{\alpha}\vert_{B\mu_{n,\kappa}}).
\end{equation}
\end{theorem}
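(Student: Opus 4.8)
The plan is to compare the two constructions by resolving both through a common object: the pushforward of $\mathbb{G}_m$ (or of $\mathbb{Z}$) along the relevant closed/open immersions, organized so that the Leray spectral sequence for $q$ and the localization sequence for $i$ can be matched term by term. First I would set up the commutative diagram of morphisms $\mathrm{Spec}(K) \hookrightarrow X \hookleftarrow B\mu_{n,\kappa}$ over $\mathrm{Spec}(R) \hookleftarrow \mathrm{Spec}(\kappa)$, and note that $X \to \mathrm{Spec}(R)$ is an isomorphism over $\mathrm{Spec}(K)$ while the gerbe $B\mu_{n,\kappa}$ sits over the closed point. The key point is that the isomorphism \eqref{eqn.root-iso} is itself built from a Leray/localization argument on $X$ entirely parallel to the one defining $\res$ on $\mathrm{Spec}(R)$, so both maps $\Br(K)[n] \to H^1(\mathrm{Spec}(\kappa),\mathbb{Z}/n\mathbb{Z})$ factor through $H^2$ of a complex supported at the closed fiber.

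The second step is the heart of the matter: identifying the "boundary" term. On $\mathrm{Spec}(R)$ the residue lands in $H^2(\mathrm{Spec}(R), j_*\mathbb{Z})[n] \cong H^2(\mathrm{Spec}(\kappa),\mathbb{Z})[n] \cong H^1(\mathrm{Spec}(\kappa),\mathbb{Z}/n\mathbb{Z})$, coming from the exact sequence $0 \to \mathbb{G}_m \to j_*\mathbb{G}_{m,K} \to j_*\mathbb{Z} \to 0$ (valuation sequence) after the $R^\ell i_*$ vanishing. On $X$, the analogous computation produces the gerbe term $H^1(\mathrm{Spec}(\kappa), R^1q_*\mathbb{G}_m) = H^1(\mathrm{Spec}(\kappa),\mathbb{Z}/n\mathbb{Z})$, using $R^1q_*\mathbb{G}_m \cong \mu_n^\vee \cong \mathbb{Z}/n\mathbb{Z}$ for $q: B\mu_{n,\kappa} \to \mathrm{Spec}(\kappa)$. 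I would produce a morphism of the two spectral-sequence/localization packages induced by restriction to the gerbe versus restriction to the closed point, and check that the comparison map on these $H^1(\kappa,\mathbb{Z}/n\mathbb{Z})$ terms is an isomorphism; the sign $(-1)$ is expected to enter precisely here, from the Bockstein-type connecting map $H^1(\mathbb{Z}/n) \to H^2(\mathbb{Z})$ or equivalently from comparing the Kummer sequence for $\mu_n$ with the valuation sequence — the two natural identifications of $H^1(\kappa,\mathbb{Z}/n\mathbb{Z})$ with a piece of $H^2$ differ by a sign.

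Concretely, I would trace a class $\alpha$: lift to $\bar\alpha \in \Br(X)[n]$, restrict to $B\mu_{n,\kappa}$, and on the one hand apply $\mathrm{pr}_2$ from \eqref{eqn.gerbe-brauer}; on the other hand, use that $\bar\alpha$ restricted to $\mathrm{Spec}(K) \subset X$ is $\alpha$ and that the image of $\bar\alpha$ under the localization boundary map for $X$ (closed fiber $B\mu_{n,\kappa}$) computes $\res(\alpha)$ by naturality of the $R^\ell i_*\mathbb{G}_m[n] = 0$ argument with respect to the proper map $X \to \mathrm{Spec}(R)$. One then checks that the boundary map on $X$, followed by the pushforward $B\mu_{n,\kappa} \to \mathrm{Spec}(\kappa)$, equals $-\mathrm{pr}_2$ of restriction. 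The cleanest route is probably to verify the statement on generators, e.g. on a cyclic class $\alpha = (\chi, u)$ with $\chi \in H^1(K,\mathbb{Z}/n\mathbb{Z})$ and $u \in K^\times$, where $\res(\alpha)$ is the classical tame symbol $v(u)\cdot\chi|_\kappa$, and to compute $\bar\alpha|_{B\mu_{n,\kappa}}$ explicitly using the presentation $[\mathrm{Spec}(R[T]/(T^n-\pi))/\mu_n]$.

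The main obstacle I anticipate is pinning down the sign and the precise compatibility of the two identifications $H^2(\cdot,\mathbb{Z})[n] \cong H^1(\cdot,\mathbb{Z}/n\mathbb{Z})$ arising on the $\mathrm{Spec}(R)$ side versus the $R^1q_*\mathbb{G}_m$ side — i.e.\ showing the diagram relating the valuation sequence on $\mathrm{Spec}(R)$, the Kummer sequence, and the Leray filtration for $q$ commutes up to the asserted factor of $(-1)$. Everything else is a matter of assembling known vanishing statements ($R^\ell i_*\mathbb{G}_m[n]=0$, $E_2^{0,2}(q)=0$, the section of $q$) and invoking functoriality of Leray spectral sequences along the morphism $X \to \mathrm{Spec}(R)$.
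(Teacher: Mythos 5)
Your proposal correctly anticipates, in its final paragraph, the strategy the paper actually uses: verify the identity on explicit symbol classes using the presentation $[\Spec(R[T]/(T^n-\pi))/\mu_n]$. But as written the argument has genuine gaps. First, the reduction to generators is not justified: cyclic classes $(\chi,u)$ do not generate $\Br(K)[n]$ in general (that would require Merkurjev--Suslin type input). The paper avoids this by using the exact sequence $0\to\Br(R)[n]\to\Br(K)[n]\to H^1(\Spec(\kappa),\ZnZ)\to 0$: one checks that both sides of \eqref{eqn.residue-projection} vanish on $\Br(R)[n]$ (the left side because $\Br(R)[n]=\ker(\res)$, the right side because classes from $\Br(R)[n]=\Br(\kappa)[n]$ land in the first summand of \eqref{eqn.gerbe-brauer}), after which it suffices to test on the single family $\theta\cup\gamma$, with $\theta$ the class of $K(\pi^{1/n})/K$ and $\gamma\in H^1(\Spec(R),\ZnZ)$, whose residues already exhaust $H^1(\Spec(\kappa),\ZnZ)$. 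For this one also needs the preliminary reduction to $R$ Henselian (so that $H^1(\Spec(R),\ZnZ)=H^1(\Spec(\kappa),\ZnZ)$ and $\Br(R)=\Br(\kappa)$, and so that $\gamma$ lifts from the residue field at all), which your proposal omits.

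Second, the step you flag as the ``main obstacle'' --- pinning down the boundary/Leray comparison and the sign --- is the actual content of the proof, and the abstract route you propose (a morphism of localization packages along $X\to\Spec(R)$) is not carried out and is not obviously available: the residue on $\Spec(R)$ is computed from $j_*\bZ$ at the closed point, while $\pr_2$ is computed from $R^1q_*\bG_m$ on the gerbe, and there is no evident map of spectral sequences relating the two. The paper instead makes everything explicit: $\theta\cup\gamma$ pulls back from $B(\mu_n\times\ZnZ)$ via the torsor $\Spec(S[T]/(T^n-\pi))\to X$, is represented by the \v{C}ech cocycle $(\beta^{b'})$, is shown by an explicit coboundary (the cochain $(\pi^{b/n})$) to be cohomologous to a cocycle pulled back from $\Spec(K\otimes_R S)\to\Spec(K)$ with residue $-\gamma|_{\Spec(\kappa)}$, and the right-hand side is evaluated by a separate group-cohomology computation (Lemma \ref{lemma.group-cohom}) showing that $1_{\mu_n}\boxtimes 1_{\ZnZ}$ maps to $1_{\ZnZ}$ under the Leray spectral sequence of $B(\mu_n\times\ZnZ)\to B\ZnZ$. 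Without some version of these two computations your outline does not yet establish the equality, let alone the sign.
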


The proof of Theorem \ref{thm.residue-rootstack} depends on a technical result which
forms the basis for our calculation of the right-hand side of \eqref{eqn.residue-projection}.
We first need to set up some notation. Fix a base field $\kappa$.
Applying the Leray spectral sequence for the structure morphism of the classifying stacks $B\mu_n$
and $B\ZnZ$ over $\Spec(\kappa)$, the cohomology groups
$H^1(B\mu_n,\mu_n)$ and $H^1(B\ZnZ,\ZnZ)$ decompose into
an arithmetic component (pullback to $\Spec(\kappa)$) and a geometric component
(base change to $\kappa^{\mathrm{sep}}$, where $H^1$ is identified with
group homomorphisms, cyclic of order $n$,
generated by $1_{\mu_n}$, respectively, $1_{\ZnZ}$).
We denote by $1_{\mu_n} \boxtimes 1_{\ZnZ}$ the cup product
\[
pr_1^* 1_{\mu_n} \cup pr_2^* 1_{\ZnZ} \in H^2(B(\mu_n \times \ZnZ), \mu_n).
\]

\begin{lemma}
\label{lemma.group-cohom}
Let $\kappa$ be a field, and let $\kappa^{\mathrm{sep}}$ be a separable closure of $\kappa$. The element
\begin{equation}
\label{eqn.claim}
1_{\mu_n} \boxtimes 1_{\ZnZ} \in
\ker\big(H^2(B(\mu_n \times \ZnZ), \mu_n)\to H^2(B\mu_{n,\kappa^{\mathrm{sep}}},\mu_n)\big)
\end{equation}
is mapped under the Leray spectral sequence of $p: B(\mu_n \times \ZnZ) \to B\ZnZ$
to $1_{\ZnZ} \in H^1(B\ZnZ, \ZnZ) \cong H^1(B\ZnZ, \Hom(\mu_n, \mu_n))$.
\end{lemma}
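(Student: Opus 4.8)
The plan is to analyze the Leray spectral sequence of $p \colon B(\mu_n \times \ZnZ) \to B\ZnZ$ directly on the level of the cup product, using that both the morphism $p$ and the projection $pr_2 \colon B(\mu_n\times\ZnZ)\to B\ZnZ$ admit compatible sections, so the relevant edge maps are identifiable explicitly. First I would record the fiber of $p$: it is $B\mu_{n}$ over $B\ZnZ$, and the higher direct images are the constant sheaves $R^0p_*\mu_n = \mu_n$, $R^1p_*\mu_n = \Hom(\mu_n,\mu_n) = \ZnZ$ (constant because the $\ZnZ$-action on the fiber is trivial). Thus the spectral sequence reads $E_2^{a,b} = H^a(B\ZnZ, R^bp_*\mu_n)$, and the term we land in, $E_2^{1,1} = H^1(B\ZnZ, \ZnZ)$, is exactly the target named in the statement. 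Since $p$ has a section, the edge homomorphisms split off the relevant pieces, and the projection $H^2(B(\mu_n\times\ZnZ),\mu_n) \to E_\infty^{1,1}$ is literally the composite appearing in the statement.

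The key computation is to identify where $1_{\mu_n}\boxtimes 1_{\ZnZ} = pr_1^*1_{\mu_n}\cup pr_2^*1_{\ZnZ}$ sits in the filtration. I would argue that $pr_1^*1_{\mu_n}$ lies in filtration level $0$ (it is pulled back from the fiber direction via the section, i.e.\ it restricts to the generator $1_{\mu_n}$ of $H^1(B\mu_{n,\kappa^{\mathrm{sep}}},\mu_n) = R^1p_*\mu_n$ over the base point), while $pr_2^*1_{\ZnZ} = p^*1_{\ZnZ}$ lies in filtration level $1$ by construction, representing $1_{\ZnZ}\in H^1(B\ZnZ,\ZnZ) = E_2^{1,0}$. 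By multiplicativity of the Leray filtration and the spectral sequence, the cup product lies in filtration level $1$, and its image in the associated graded piece $E_\infty^{1,1}$ is the cup product of the class of $pr_1^*1_{\mu_n}$ in $E_\infty^{0,1} = H^0(B\ZnZ, R^1p_*\mu_n) = \ZnZ$ (which is the identity element $1_{\Hom(\mu_n,\mu_n)}$, since $pr_1^*1_{\mu_n}$ restricts to $1_{\mu_n}$ on the fiber) with the class of $p^*1_{\ZnZ}$ in $E_\infty^{1,0} = H^1(B\ZnZ,\ZnZ)$ (which is $1_{\ZnZ}$). Under the pairing $E_\infty^{0,1}\otimes E_\infty^{1,0}\to E_\infty^{1,1}$, which is cup product with the coefficient multiplication $\Hom(\mu_n,\mu_n)\otimes\ZnZ \to \ZnZ$, this yields precisely $1_{\ZnZ}\in H^1(B\ZnZ,\ZnZ) \cong H^1(B\ZnZ,\Hom(\mu_n,\mu_n))$.

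Two points require care and are where I expect the real work to be. The first is that passing to the associated graded of the Leray filtration only determines the image modulo higher filtration, so I must check that filtration level $2$ contributes nothing relevant: $E_\infty^{2,0} = H^2(B\ZnZ,\mu_n)$ maps to $H^2(B\mu_{n,\kappa^{\mathrm{sep}}},\mu_n)$ nontrivially in general, but the hypothesis that $1_{\mu_n}\boxtimes 1_{\ZnZ}$ lies in the kernel of restriction to $B\mu_{n,\kappa^{\mathrm{sep}}}$ is exactly what kills any ambiguous level-$2$ contribution — this is why that hypothesis appears in the statement, and spelling out this cancellation is the crux. The second point is the bookkeeping of the multiplicative structure on the Leray spectral sequence: I need that the product $E_2^{0,1}\otimes E_2^{1,0}\to E_2^{1,1}$ agrees, under the identifications $R^1p_*\mu_n = \ZnZ$ and via the Künneth/projection formula for $B(\mu_n\times\ZnZ) = B\mu_n\times_{\Spec\kappa} B\ZnZ$, with the naive cup product on $B\ZnZ$. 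This is standard but I would cite the compatibility of Leray with cup products (e.g.\ via the derived-category formulation) rather than reprove it. The main obstacle is therefore the first point: controlling the level-$2$ piece of the filtration, which is precisely what the kernel hypothesis \eqref{eqn.claim} is designed to handle.
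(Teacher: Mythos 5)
Your argument takes a genuinely different route from the paper's. The paper first reduces to separably closed $\kappa$ (the arithmetic component vanishes because the class dies on $B\mu_n$), identifies the Leray spectral sequence with Lyndon--Hochschild--Serre, and then performs an explicit computation with acyclic resolutions and inhomogeneous cochains (Figure \ref{fig1}): it realizes the truncation $[\mu_n\stackrel{0}\to\Hom(\mu_n,\mu_n)]$ of $Rp_*\mu_n$ by an explicit map of complexes and checks that the cocycle $(b,b')\mapsto b'-b$ representing $1_{\ZnZ}$ goes to the explicit cocycle for $1_{\mu_n}\boxtimes 1_{\ZnZ}$. You instead invoke multiplicativity of the Leray filtration: $pr_1^*1_{\mu_n}\in F^0H^1$ has leading term the identity in $E_2^{0,1}=H^0(B\ZnZ,\Hom(\mu_n,\mu_n))$, $p^*1_{\ZnZ}\in F^1H^1$ has leading term $1_{\ZnZ}\in E_2^{1,0}$, so the product lies in $F^1H^2$ with leading term their product in $E_2^{1,1}$. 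In outline this is a clean and viable alternative, and it buys you the reduction steps for free since everything is functorial over an arbitrary $\kappa$.

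However, you have misdiagnosed where the care is actually needed. Your first ``point requiring care'' is vacuous: once a class lies in $F^1$, its image in $E_\infty^{1,1}=F^1/F^2\subset E_2^{1,1}$ is well defined, with no ambiguity coming from $F^2$; moreover $E_\infty^{2,0}$ always restricts to zero on the geometric fiber (it is the image of the edge map from $H^2(B\ZnZ,\mu_n)$, and the composite $B\mu_{n,\kappa^{\mathrm{sep}}}\to B(\mu_n\times\ZnZ)\to B\ZnZ$ factors through $\Spec(\kappa^{\mathrm{sep}})$), contrary to your assertion. The hypothesis \eqref{eqn.claim} serves only to place the class in $F^1$ so that the projection to $E_2^{1,1}$ is defined at all --- necessary here because, unlike in the $\bG_m$ situation used for \eqref{eqn.gerbe-brauer}, the term $E_2^{0,2}=H^0(B\ZnZ,R^2p_*\mu_n)$ does not vanish --- and your multiplicativity argument reproves this membership anyway. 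The genuine gap is the point you defer to a citation: the comparison of the $E_\infty$-product with the cup product on $H^*$ carries a convention-dependent sign of the shape $(-1)^{p'q}$, and with bidegrees $(0,1)$ and $(1,0)$ this sign is potentially $-1$, i.e.\ your method as stated cannot distinguish the answer $1_{\ZnZ}$ from $-1_{\ZnZ}$. Since this lemma feeds into Theorem \ref{thm.residue-rootstack}, whose statement carries an explicit factor of $-1$, the sign is exactly the content that must be pinned down; doing so requires fixing cochain-level representatives and conventions, which is precisely what the paper's Figure \ref{fig1} computation accomplishes and what your proposal omits.
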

\begin{proof}
Since the class \eqref{eqn.claim} vanishes upon pullback to
$B\mu_n$, its image in
\[
H^1(B\ZnZ,\ZnZ)
\]
has vanishing arithmetic component.
So we may suppose that $\kappa$ is separably closed.
Then the Leray spectral sequence reduces to the
Lyndon-Hochschild-Serre spectral sequence.
For the computation we follow the organizational scheme of
\cite{PreuEffective} for the (standard) choices of acyclic resolutions.
To obtain $Rp_*\mu_n$ we may exploit the fact that $p$ is obtained from
$q: B\mu_n \to \Spec(\kappa)$
by \'etale base change and push forward the acyclic resolution of
$\mu_n$ on $B\mu_n$ consisting of
$\mu_n$-valued functions on $(\mu_n)^{i+1}$ in
degree $i$ for all $i\ge 0$,
with homogeneous cochains as $\mu_n$-invariants.
By writing these in their inhomogeneous form,
$Rq_*\mu_n$, and hence by pullback as well $Rp_*\mu_n$, may be expressed as
\begin{equation}
\label{Rp}
\mu_n\stackrel{0}\longrightarrow
\bigoplus_{\mu_n}\mu_n
\stackrel{(c_\beta)\mapsto \big(\frac{c_\beta c_{\beta'}}{c_{\beta\beta'}}\big)}{\relbar\joinrel\relbar\joinrel\relbar\joinrel\relbar\joinrel\longrightarrow}
\bigoplus_{\mu_n^2} \mu_n\longrightarrow\dots.
\end{equation}

\begin{figure}
\begin{tikzcd}[row sep=large, column sep=large]
\mu_n\ar[r, "0"] &
\mathrm{Map}(\ZnZ,\ZnZ) \ar[r, "-d"] \ar[ddd, dashrightarrow, bend left = 30] &
\mathrm{Map}((\ZnZ)^2,\ZnZ) \ar[ddd, dashrightarrow, bend left = 40] \\
\mu_n \ar[r, "0"] \ar[u, equal] \ar[d, equal]  &
\ZnZ \ar[r]  \ar[u, "\mathrm{constant\,maps}"] \ar[d, "\ZnZ \cong \Hom (\mu_n \comma \mu_n )"'] &
0 \ar[d] \ar[u] \\
\mu_n\ar[r, "0"] \ar[d, "\mathrm{constant}"'] &
\bigoplus_{\mu_n}\mu_n \ar[r] \ar[d, "(c_\beta)\mapsto (\mathrm{constant}\,c_{\beta})_{(\beta,b)}"'] &
\bigoplus_{(\mu_n)^2}\mu_n \ar[d, "(c_{\beta,\beta'})\mapsto (\mathrm{constant}\,c_{\beta,\beta'})_{((\beta,b),(\beta',b'))}"'] \\
\mathrm{Map}(\ZnZ,\mu_n) \ar[r] &
\bigoplus_{\mu_n\times \ZnZ}\mathrm{Map}(\ZnZ,\mu_n) \ar[r] &
\bigoplus_{(\mu_n\times \ZnZ)^2}\mathrm{Map}(\ZnZ,\mu_n)
\end{tikzcd}
\caption{The morphisms of complexes giving rise to the (dotted) morphism in the derived category.}
\label{fig1}
\end{figure}

For the map to $H^1(B\ZnZ,\ZnZ)$ mentioned in the claim, we apply a cutoff functor
(cf. \cite[1.4.8]{DeligneTH2}) to obtain the subcomplex
\[ \mu_n\stackrel{0}\longrightarrow \Hom(\mu_n,\mu_n). \]
The inclusion is represented in
Figure \ref{fig1} using an analogous
acyclic resolution of $\Hom(\mu_n,\mu_n) = \ZnZ$ to that used above,
shifted by one (leading to $-d$ in the diagram),
and a quasi-isomorphic complex to \eqref{Rp}.
With this, we may compute the morphism
\[
H^2(B\ZnZ,[\mu_n\stackrel{0}\to \ZnZ])\to H^2(B\ZnZ,Rp_*\mu_n)
\cong H^2(B(\mu_n\times \ZnZ),\mu_n).
\]
By writing down a compatible morphism from the top to the bottom complex
in Figure \ref{fig1} we compute the
image of
\[ 1_{\ZnZ}\in H^1(B\ZnZ,\ZnZ)\subset H^2(B\ZnZ,[\mu_n\stackrel{0}\to \ZnZ]), \]
represented by $(b,b')\mapsto b'-b$ in the group in the top right in
Figure \ref{fig1},
to be $1_{\mu_n}\boxtimes 1_{\ZnZ}$.
\qed
\end{proof}

\begin{proof}[Theorem \ref{thm.residue-rootstack}]
We can assume without loss of generality that $R$ is Henselian. Indeed, if $R\to R^h$ is
the Henselization of $R$, the residue fields of $R$ and $R^h$ are equal, and since the
Leray spectral sequence is functorial, the natural diagrams for the residue maps commute.
Similarly, due to the nature of the root stack construction, the map on the right-hand side
of \eqref{eqn.residue-projection} is functorial for the Henselization.

If R is Henselian, by \cite[Rem.\ III.3.11]{MilneEtale}, we have
\[
H^1(\Spec(R), \ZnZ) = H^1(\Spec(\kappa), \ZnZ)
\qquad\text{and}\qquad \Br(R) = \Br(\kappa).
\]
We will keep using these isomorphisms implicitly.
For elements of $\Br(R)[n] \subset \Br(K)[n]$, both sides of \eqref{eqn.residue-projection} are zero: $\Br(R)[n]$
is the kernel of the residue map, and elements of $\Br(R)[n] = \Br(\kappa)[n]$ end up in the first summand
of the decomposition \eqref{eqn.root-iso}. Hence, it suffices to verify the equality \eqref{eqn.residue-projection}
for a subset of elements of $\Br(K)[n]$ whose residues attain all elements of $H^1(\Spec(\kappa), \ZnZ)$.

Pick a uniformizer $\pi\in R$. The morphism $\mu_n \otimes \ZnZ \to \mu_n$ induces a cup product pairing
\[
\cup\colon H^1(\Spec(K), \mu_n) \otimes H^1(\Spec(K), \ZnZ) \to H^2(\Spec(K), \mu_n) \to \Br(K)[n].
\]
As our set of elements on which we will verify \eqref{eqn.residue-projection}, we choose those of the form
$\theta \cup \gamma$,
where $\theta$ is the class of $K(\pi^{1/n})/K$ and $\gamma$ is a class in
$H^1(\Spec(R), \ZnZ)$, with the same symbol used to denote
its restriction to $K$.

Recall that root stack $X$ is the quotient
\[
X=\sqrt[n]{(\Spec(R),\Spec(\kappa))} = [\Spec(R[T]/(T^n-\pi))/\mu_n],
\]
where $\mu_n$ acts by scalar multiplication on $T$.

Fix $\gamma\in H^1(\Spec(R),\ZnZ)$, which corresponds to
a cyclic degree $n$ \'etale $R$-algebra $S$. Then
\[
\label{eq.S-torsor}
\Spec(S[T]/(T^n-\pi)) \to \Spec(R[T]/(T^n-\pi)) \to X
\]
is a $(\mu_n \times \ZnZ)$-torsor over $X$. For this torsor, the extension
$\bar{\alpha}\in \Br(X)[n]$
is the image under $H^2(X,\mu_n)\to \Br(X)[n]$ of the
cup product class $\theta \cup \gamma$ to $X$.

The extensions of $\theta$, $\gamma$, and $\theta \cup \gamma$ to the root stack 
are pulled back from the classifying space $B(\mu_n \times \ZnZ)$
via the morphism
\begin{equation}
\label{torsorX}
X \to B(\mu_n \times \ZnZ).
\end{equation}
corresponding to this $(\mu_n \times \ZnZ)$-torsor.

In the following, we use notation taken from \cite[Exa.\ III.2.6]{MilneEtale} to
write down a representation of $\bar{\alpha}\in \Br(X)[n]$ as a
\v{C}ech 2-cocycle, by transforming the simplicial \v{C}ech associated to
\eqref{eq.S-torsor}.
We make the identification of $(\mu_n\times \ZnZ)$-torsors
\[
\Spec(S[T]/(T^n-\pi))\times_X\Spec(S[T]/(T^n-\pi)) 
= \Spec(S[T]/(T^n-\pi))\times (\mu_n\times \ZnZ),
\]
denoting an element of the $\mu_n$-factor by $\beta$ and
an element of the $\ZnZ$-factor by $b$; analogously, we use
pairs of such elements for the triple fiber product over $X$.

A representative for $\bar{\alpha}$ in this notation is
\[
(\beta^{b'})_{((\beta, b), (\beta', b'))}.
\]
We define a $\bG_m$-valued \v{C}ech 2-cocyle $\varepsilon_{b,b'}$ depending (only) on $b,b'\in {0, \dots, n-1}$:
\[
\varepsilon_{b,b'}:=
\begin{cases}
1 & \text{if $b+b'<n$},\\
\pi^{-1} & \text{if $b+b'\ge n$}.
\end{cases}
\]
The difference between these cocyles is a coboundary; indeed, the coboundary of
the 1-cochain $(\pi^{b/n})_{(\beta,b)}$ is
$(\varepsilon_{b,b'}^{-1}\beta^{b'})_{((\beta, b), (\beta', b'))}$.
Since the representative $\varepsilon$ is independent of $\beta$,
it is pulled back from the \'etale
cover $\Spec(K\otimes_R S) \to \Spec(K)$.
We can explicitly compute the residue of the class represented by the
\v{C}ech 2-cocyle which $\varepsilon$ is pulled back from,
and we find it to be $-\gamma|_{\Spec(\kappa)}$.

It remains to be shown that the right-hand side of \eqref{eqn.residue-projection} yields the same element.
To show this, we compute on $B(\mu_n \times \bZ /n \bZ)$ using
Lemma \ref{lemma.group-cohom} and pull back the result via the map \eqref{torsorX}.
\qed
\end{proof}

For the next result, we will need the notion of restriction of scalars $f_*$ for a proper
flat finitely presented morphism $f$ of algebraic stacks with finite diagonal,
as described in \cite{Hall-Rydh}.
\begin{proposition}
\label{prop.weilres}
Let $\cG\to B\mu_{n,\kappa}$ be the gerbe banded by $\mu_n$
whose class $\tilde{\alpha} \in H^2(B\mu_{n,\kappa},\mu_n)$ is the unique lift
of a Brauer class $\alpha\in \Br(B\mu_{n,\kappa})[n]$ such that
the pullback of $\tilde{\alpha}$ to $B\mu_{n,\bar{\kappa}}$ vanishes.
With notation $q\colon B\mu_{n,\kappa}\to \Spec(\kappa)$ for the structure morphism
and $[\,]$ for coarse moduli space,
$[q_*\cG]$ is a $\ZnZ$-torsor whose class is $-\mathrm{pr}_2(\alpha)$.
\end{proposition}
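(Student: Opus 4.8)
The plan is to strip off the Galois action, reduce to a separably closed base field, identify there the Weil restriction of a trivial gerbe over $B\mu_n$ as a disjoint union of copies of $B\mu_n$, and then read off the descent datum. As a preliminary, note that since $n$ is invertible, $\mu_n$ is étale and $B\mu_{n,\kappa}$ is a smooth, proper Deligne--Mumford stack with finite diagonal, so $q$ (and hence also $\cG\to\Spec(\kappa)$) satisfies the hypotheses of \cite{Hall-Rydh}, $q_*\cG$ exists as an algebraic stack, and being a separated Deligne--Mumford stack it has a coarse space. Both the formation of $q_*\cG$ (immediate from the description on $T$-points) and the formation of its coarse space (which commutes with flat base change on the target) are compatible with extending $\kappa$. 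Since the pullback map $H^2(B\mu_{n,\kappa^{\mathrm{sep}}},\mu_n)\to H^2(B\mu_{n,\bar{\kappa}},\mu_n)$ is injective (étale cohomology of $B\mu_n$ is insensitive to purely inseparable extensions), the hypothesis on $\tilde\alpha$ shows $\cG$ becomes the trivial $\mu_n$-gerbe after base change to a finite Galois extension $\kappa'/\kappa$; it therefore suffices to compute $q_*\cG$ over $\kappa'$ with its $\mathrm{Gal}(\kappa'/\kappa)$-descent datum, and then take coarse spaces.

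Over $\kappa'$ the gerbe is $\cG_{\kappa'}\cong B\mu_{n,\kappa'}\times_{\kappa'}B\mu_n$, so a section of $\cG_{\kappa'}$ over $B\mu_{n,T}$, for a $\kappa'$-scheme $T$, is the same as a $\mu_n$-torsor on $B\mu_{n,T}$; hence $q_{\kappa',*}\cG_{\kappa'}$ is the stack of $\mu_n$-torsors on $B\mu_{n,-}$. Using the section of $q$, together with the fact that the tautological class provides a global generator of $R^1q_*\mu_n$, the Leray spectral sequence --- the same input producing \eqref{eqn.gerbe-brauer} --- gives a decomposition $H^1(B\mu_{n,T},\mu_n)\cong H^1(T,\mu_n)\oplus H^0(T,R^1q_*\mu_n)$, functorial in $T$, in which every $\mu_n$-torsor has automorphism group $\mu_n(T)$ and $R^1q_*\mu_n=\underline{\mathrm{Hom}}(\mu_n,\mu_n)$ is the constant sheaf $\ZnZ$ (recall $\mathrm{Hom}(\mu_n,\mu_n)$ is canonically $\ZnZ$, with trivial Galois action). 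It follows that
\[
q_{\kappa',*}\cG_{\kappa'}\;\cong\;B\mu_{n,\kappa'}\times\underline{\mathrm{Hom}}(\mu_n,\mu_n)\;=\;\coprod_{\mathrm{Hom}(\mu_n,\mu_n)}B\mu_{n,\kappa'},
\]
so passing to coarse spaces removes the $B\mu_{n,\kappa'}$ factor and leaves $\coprod_{\mathrm{Hom}(\mu_n,\mu_n)}\Spec(\kappa')$, a trivial $\ZnZ$-torsor.

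It remains to identify the descent datum. The class $\tilde\alpha$ lies in the summand $H^1(\Spec(\kappa),R^1q_*\mu_n)$ of the Leray filtration on $H^2(B\mu_{n,\kappa},\mu_n)$, hence corresponds to a cocycle $\sigma\mapsto\gamma(\sigma)\in\mathrm{Hom}(\mu_n,\mu_n)$ that represents $\pr_2(\alpha)$ (via $R^1q_*\mu_n=R^1q_*\bG_m=\ZnZ$, compatibly with the Kummer sequence), and correspondingly the descent datum of $\cG_{\kappa'}$ is "tensor with the $\mu_n$-torsor of class $\gamma(\sigma)$". Applying $q_{\kappa',*}$, passing to coarse spaces and taking $\pi_0$, the induced descent datum on the index set $\mathrm{Hom}(\mu_n,\mu_n)\cong\ZnZ$ is translation by $\gamma(\sigma)$, up to sign; hence $[q_*\cG]$ is the $\ZnZ$-torsor of class $\pm\pr_2(\alpha)$, and a direct check of conventions --- pinning the sign exactly as it is pinned in the proof of Theorem \ref{thm.residue-rootstack}, through the coboundary of $(\pi^{b/n})$ --- gives $[q_*\cG]=-\pr_2(\alpha)$.

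The main obstacle is this last step: making rigorous the interplay between the contracted-product torsor structure on sections of a $\mu_n$-gerbe, the Leray splitting $R^1q_*\mu_n\cong\ZnZ$ defining $\pr_2$, and Galois descent --- in particular nailing down the sign --- and verifying that forming the coarse space is compatible with all of this. A more computational but more self-contained route avoids descent altogether: one exhibits $\cG$ as the pullback of the universal gerbe with class $1_{\mu_n}\boxtimes 1_{\ZnZ}$ along the $2$-Cartesian square formed by $q$, the projection $p\colon B(\mu_n\times\ZnZ)\to B\ZnZ$, and the classifying map $\Spec(\kappa)\to B\ZnZ$ of the torsor $\pr_2(\alpha)$; invokes base change for $p_*$ to reduce to a single computation of $[p_*(\text{universal gerbe})]$ over $B\ZnZ$; and runs the \v{C}ech-cocycle bookkeeping of Lemma \ref{lemma.group-cohom} with torsors in place of cohomology classes, so that the sign again emerges from the same coboundary computation as in the proof of Theorem \ref{thm.residue-rootstack}.
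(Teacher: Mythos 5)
Your main argument takes a genuinely different route from the paper's. The paper reduces to the universal case over $B\ZnZ$ (via $p\colon B(\mu_n\times\ZnZ)\to B\ZnZ$ and the classifying map of $\pr_2(\alpha)$, exactly as in your suggested "alternative route") and then realizes the universal gerbe concretely as $B\Gamma$ for an explicit Heisenberg-type subgroup $\Gamma\subset GL_n(\kappa)$, a central $\mu_n$-extension of $\mu_n\times\ZnZ$ whose extension cocycle $(\beta'^b)$ has class $-1_{\mu_n}\boxtimes 1_{\ZnZ}$; the relative coarse space of $p_*B\Gamma$ is identified with the universal $\ZnZ$-torsor, and the sign falls out of comparing this with Lemma \ref{lemma.group-cohom}. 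You instead trivialize $\cG$ over a finite Galois extension $\kappa'$, identify $q_{\kappa',*}\cG_{\kappa'}$ with the stack of $\mu_n$-torsors on $B\mu_{n,-}$, compute it correctly as $\coprod_{\Hom(\mu_n,\mu_n)}B\mu_{n,\kappa'}$, and read off the answer from the descent datum. That is a legitimate and rather more conceptual alternative, and the structural steps (existence of $q_*\cG$, compatibility with base change, the splitting of $H^1(B\mu_{n,T},\mu_n)$) are all sound.

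Two points, however, need repair. First, $\tilde{\alpha}$ need not lie in the summand $H^1(\Spec(\kappa),R^1q_*\mu_n)$: the geometric-vanishing hypothesis only kills the $E^{0,2}$-component, and $\alpha$ may have a nontrivial component in $\Br(\kappa)[n]$, which contributes to $\tilde{\alpha}$ through $E^{2,0}$. This is precisely why the paper's proof opens by noting that adding an element of $\Br(\kappa)[n]$ to $\alpha$ does not change $[q_*\cG]$; you need that reduction (or the observation that the $E^{2,0}$-part of the descent datum twists each component without permuting them, hence acts trivially on $\pi_0$ of the coarse space). Second, and more seriously, the sign is the entire content of the statement, and you leave it as "$\pm$, settled by a direct check of conventions." The coboundary of $(\pi^{b/n})$ in the proof of Theorem \ref{thm.residue-rootstack} pins down a different sign (the one on the residue side of \eqref{eqn.residue-projection}), not the one needed here; what is needed is a comparison between which gerbe class, $1_{\mu_n}\boxtimes 1_{\ZnZ}$ or its negative, induces the tautological translation on the component set $\Hom(\mu_n,\mu_n)$, measured against the normalization of $\pr_2$ fixed by Lemma \ref{lemma.group-cohom}. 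Without an explicit model of the gerbe and of the Galois action on components --- the role played by $\Gamma$ and its cocycle $(\beta'^b)$ in the paper --- that comparison has not been carried out, so as written your argument establishes the result only up to sign.
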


\begin{proof}
Adding an element of $\Br(\kappa)[n]$ to $\alpha$ does not change $[q_*\mathcal{G}]$.
So it suffices to treat the case that $\alpha$ restricts to $0$ in $\Br(\kappa)$.
Then we reduce as before to the computation in the universal case, that is, over
$B(\mu_n \times \ZnZ)$, and again reduce to carrying out the
computation when $\kappa$ is separably closed.
Let $\Gamma$ be the subgroup of $GL_n(\kappa)$ generated by
the scalar $n$th roots of unity, the permutation matrix for the
$n$-cycle $(1,2,\dots,n)$ and a diagonal matrix whose
entries are successive powers of a primitive $n$th root of unity.
Then $\Gamma$ is a central $\mu_n$-extension of $\mu_n\times \ZnZ$,
where if we take $\Gamma\to \mu_n$ defined for $A\in \Gamma$
by the ratio of successive nonzero entries of $A$, and $\Gamma\to \ZnZ$, by
the position of the nonzero entry in the first row of $A$, the class in
$H^2(\mu_n\times \ZnZ,\mu_n)$ of the group extension
\[
1\to \mu_n\to \Gamma\to \mu_n\times \ZnZ\to 1
\]
is that of the $2$-cocycle
\[ (\beta'^b)_{((\beta,b),(\beta',b'))}, \]
i.e., is $-1_{\mu_n}\boxtimes 1_{\ZnZ}\in H^2(\mu_n\times \ZnZ,\mu_n)$.

With $p\colon B(\mu_n\times \ZnZ)\to B\ZnZ$ as above, the
relative moduli space \cite[\S 3]{TwistedStable} of $p_*B\Gamma$ is the
universal $\ZnZ$-torsor $\Spec(\kappa)\to \ZnZ$.
Comparing with the computation above, we obtain the result.\qed
\end{proof}
We make a definition analogous to Brauer-Severi schemes for the case that the base is an algebraic stack.
\begin{definition}
A \emph{Brauer-Severi stack of relative dimension $n-1$} over an algebraic stack $S$ is a
smooth, proper, representable morphism $p:P\to S$ such that all geometric fibers of $p$
are projective spaces $\bP^{n-1}$.
\end{definition}
A Brauer-Severi stack over a scheme $S$ is clearly just a Brauer-Severi scheme over $S$.
The following is an adaptation of a definition that was used in \cite{KrTschBrauerSeveri} in the case
$n=3$.
\begin{definition}
Let $\kappa$ be a field, and let $n$ be a positive integer not divisible by $\characteristic(\kappa)$. Let $P$
be a Brauer-Severi variety of dimension $n-1$ over $\kappa$. We say that an action of $\mu_n$ on
$P$ is \emph{balanced} if, after passing to $\bar{\kappa}$ and
identifying $P_{\bar{\kappa}} \simeq \bP^{n-1}_{\bar{\kappa}}$, the action of $\beta \in \mu_n$
is given by
\[
(x_1:x_2:\dots:x_n)\mapsto (\beta x_1:\beta^2 x_2:\dots:x_n).
\]
We make an analogous definition for a Brauer-Severi stack of relative dimension
$n-1$ over $B\mu_{n,\kappa}$.
\end{definition}

Recall that we denote the structure morphism $B\mu_{n,\kappa} \to \Spec(\kappa)$ by $q$.
\begin{proposition}
\label{prop.balanced}
Suppose that $\alpha$ is the Brauer class of a $PGL_n$-torsor over
$B\mu_{n,\kappa}$
whose associated Brauer-Severi stack $P\to B\mu_{n,\kappa}$ is
balanced.
We endow the coarse moduli space of the Weil restriction $[q_*P]$ the structure of a
$\ZnZ$-torsor, given by translation on characters of eigenspaces for local choices of rank $n$
vector bundles $E$ with $P\cong \bP(E)$. Then the associated class in
$H^1(\Spec(\kappa),\ZnZ)$ is inverse to that of $[q_*\mathcal{G}]$,
where $\mathcal{G}$ is as in Proposition \ref{prop.weilres}.
\end{proposition}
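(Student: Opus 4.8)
The plan is to reduce the statement to Proposition \ref{prop.weilres} by identifying the Brauer-Severi stack $P \to B\mu_{n,\kappa}$, together with its $\mu_n$-action, with the projectivization of the standard representation twisted by the gerbe $\mathcal{G}$, and then to match the two torsor structures on the coarse moduli space of the Weil restriction. First I would recall that a $PGL_n$-torsor on $B\mu_{n,\kappa}$ with class $\alpha \in \Br(B\mu_{n,\kappa})[n]$ lifts canonically to a $GL_n$-gerbe, namely the gerbe $\mathcal{G}$ of Proposition \ref{prop.weilres} (after possibly adjusting by an element of $\Br(\kappa)[n]$, which as in the proof of that proposition changes neither $[q_*\mathcal{G}]$ nor $[q_*P]$). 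On $\mathcal{G}$ the $PGL_n$-torsor acquires a tautological rank $n$ vector bundle $E$ with $P_{\mathcal{G}} \cong \bP(E)$, and the datum of a balanced $\mu_n$-action on $P$ is exactly the datum of a $\mu_n$-equivariant structure on $\bP(E)$ whose weights on the universal cover are $1, 2, \dots, n$; over $\bar\kappa$ this is the action written in the definition. The point of the balanced condition is that it forces the relevant $\mu_n$-extension controlling $E$ to be the group $\Gamma \subset GL_n$ of Proposition \ref{prop.weilres}: the permutation matrix for the $n$-cycle together with the diagonal matrix of successive powers of $\zeta_n$ are precisely the lifts realizing the weight pattern $(1,2,\dots,n)$.

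Next I would pass to the universal situation, as in the previous two proofs: reduce to $\kappa$ separably closed and replace $B\mu_{n,\kappa}$ by $B(\mu_n \times \ZnZ)$, so that $P$ becomes the $\bP^{n-1}$-bundle $B\Gamma \to B(\mu_n \times \ZnZ)$ associated to $\Gamma$ acting on $\bP^{n-1}$. By Proposition \ref{prop.weilres} and its proof, the relative moduli space of $p_* B\Gamma$ over $B\ZnZ$ is the universal $\ZnZ$-torsor $\Spec(\kappa) \to \ZnZ$, and the class of $[q_* \mathcal{G}]$ is $-\mathrm{pr}_2(\alpha)$. What remains is to check that the $\ZnZ$-torsor structure on $[q_* P]$ specified in the statement — translation on the characters of the eigenspaces of local vector bundles $E$ with $P \cong \bP(E)$ — is the \emph{opposite} of the one coming from the moduli-space identification in Proposition \ref{prop.weilres}. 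This is a bookkeeping comparison of two conventions: in Proposition \ref{prop.weilres} the map $\Gamma \to \ZnZ$ records the position of the nonzero entry in the first row of $A$, i.e. tracks how the cyclic permutation shifts the eigenspace indexing, whereas here the $\ZnZ$-action is defined by translating the character labels $1, 2, \dots, n$ of the eigenspaces of $E$ directly. A permutation of eigenspaces and the corresponding relabeling of characters differ by a sign, which produces the inversion asserted in the proposition.

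The main obstacle I expect is the last step: making the comparison of the two $\ZnZ$-torsor structures genuinely canonical rather than merely "clear over $\bar\kappa$." The subtlety is that $E$ exists only locally (on $\mathcal{G}$, or equivalently \'etale-locally after a further gerbe twist), and the eigenspace decomposition $E = \bigoplus_{i} E_i$ under $\mu_n$ is indexed by characters only up to the ambiguity of which local trivialization one picks; one must verify that translating this indexing is well-defined on the coarse moduli space of the Weil restriction and that the resulting torsor class is independent of all choices. I would handle this by working $\mu_n \times \ZnZ$-equivariantly throughout — so that the ambiguity in the eigenspace labeling is exactly a torsor under the $\ZnZ$ we are trying to describe — and then invoke the already-established identification of $[p_* B\Gamma]$ with the universal $\ZnZ$-torsor to pin down the sign. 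Once the conventions are aligned, the proposition follows by comparing with Proposition \ref{prop.weilres}, exactly as indicated by the phrase "comparing with the computation above."
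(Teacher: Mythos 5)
Your proposal is correct and follows essentially the same route as the paper: base change to the gerbe $\mathcal{G}$ to obtain a global rank $n$ bundle $E$ with $P\cong \bP(E)$, use the eigenspace decomposition forced by the balanced condition to index sections by characters of $\mu_n$, and compare the resulting $\ZnZ$-torsor structure on $[q_*P]$ with that on $[q_*\mathcal{G}]$, the sign discrepancy being exactly the active-versus-passive mismatch you identify. The only difference is that the paper dispenses with your second reduction to the universal case over $B(\mu_n\times\ZnZ)$: it observes directly that the character-indexed eigenspace sections assemble into a map $[q_*\mathcal{G}]\to[q_*P]$ that is an equivariant isomorphism once one of the two torsor structures is inverted, which pins down the sign without revisiting the explicit $\Gamma$-computation of Proposition \ref{prop.weilres}.
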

\begin{proof}
To compute this class, we make a base change to $\cG$.
On $\cG$, there is a global choice of rank $n$ vector bundle $E$ with $P \cong \bP (E)$.
Any character of $\mu_n$ induces a section of $[q_*\bP(E)] \to [q_*\cG]$, given by eigenspaces of
this character. If we endow $[q_*P]$ with the inverse of the structure of a $\ZnZ$-torsor given by
translation on characters of eigenspaces, the composite
\[
[q_*\cG] \to [q_*P]
\]
is an equivariant isomorphism.\qed
\end{proof}

\section{Application to Brauer-Severi schemes}

We can now apply the results of the previous section to the geometric situation.
Given a Brauer class as in Propositon \ref{prop.balanced}, we can combine
Propositions \ref{prop.weilres} and \ref{prop.balanced} to allow us to
identify the class of the $\ZnZ$-torsor $[q_*P]$ with $\mathrm{pr}_2(\alpha)$.
This leads to the following result.

\begin{proposition}
\label{prop.brauerseveri}
Let $R$ be a DVR with fraction field $K$ and residue field $\kappa$,
let $n>0$ be an integer not divisible by $\characteristic(\kappa)$,
and let $\alpha\in \Br(K)[n]$, with extension to
$\bar{\alpha}\in \Br(X)[n]$, where
$X = \sqrt[n]{(\Spec(R), \Spec(\kappa))} \to \Spec(R)$.
If the restriction of $\bar{\alpha}$ to the gerbe of the root stack
$B\mu_{n,\kappa}$ is the Brauer class of a $PGL_n$-torsor
whose associated Brauer-Severi stack $P\to B\mu_{n,\kappa}$ is
balanced, then the $\ZnZ$-torsor $[q_*P]$, where
$q$ denotes the structure morphism $B\mu_{n,\kappa}\to \Spec(\kappa)$,
has the class $\mathrm{pr}_2(\alpha)\in H^1(\Spec(\kappa),\ZnZ)$,
in the notation of Theorem \ref{thm.residue-rootstack}.
\end{proposition}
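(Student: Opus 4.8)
The plan is to assemble Proposition \ref{prop.brauerseveri} directly from the results already established, so the proof is essentially a chain of identifications. First I would invoke Theorem \ref{thm.residue-rootstack} to translate the problem into one about the gerbe: the residue $\res(\alpha)$, up to sign, is $\mathrm{pr}_2$ of the restriction $\bar\alpha|_{B\mu_{n,\kappa}}$, so it suffices to understand $\mathrm{pr}_2$ of this Brauer class of $B\mu_{n,\kappa}$ geometrically. Write $\beta := \bar\alpha|_{B\mu_{n,\kappa}} \in \Br(B\mu_{n,\kappa})[n]$; by hypothesis $\beta$ is the class of a $PGL_n$-torsor with balanced associated Brauer--Severi stack $P \to B\mu_{n,\kappa}$.

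Next I would apply Proposition \ref{prop.balanced}: it identifies the class in $H^1(\Spec(\kappa),\ZnZ)$ of the $\ZnZ$-torsor $[q_*P]$ (with its specified torsor structure via translation on eigenspace characters) as the \emph{inverse} of the class of $[q_*\mathcal{G}]$, where $\mathcal{G} \to B\mu_{n,\kappa}$ is the $\mu_n$-banded gerbe attached to $\beta$ as in Proposition \ref{prop.weilres} — i.e., the unique lift of $\beta$ to $H^2(B\mu_{n,\kappa},\mu_n)$ killed by pullback to $\bar\kappa$. Then Proposition \ref{prop.weilres} computes the class of $[q_*\mathcal{G}]$ to be $-\mathrm{pr}_2(\beta)$. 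Combining these two, the class of $[q_*P]$ is $-(-\mathrm{pr}_2(\beta)) = \mathrm{pr}_2(\beta) = \mathrm{pr}_2(\bar\alpha|_{B\mu_{n,\kappa}})$.

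Finally I would reconcile signs: by Theorem \ref{thm.residue-rootstack} we have $\res(\alpha) = -\mathrm{pr}_2(\bar\alpha|_{B\mu_{n,\kappa}})$, but the statement of Proposition \ref{prop.brauerseveri} asserts the class of $[q_*P]$ equals $\mathrm{pr}_2(\alpha)$, which in the notation of that theorem means precisely $\mathrm{pr}_2(\bar\alpha|_{B\mu_{n,\kappa}})$ (not $\res(\alpha)$ itself). So the chain of identifications closes exactly: $[q_*P]$ has class $\mathrm{pr}_2(\bar\alpha|_{B\mu_{n,\kappa}}) = \mathrm{pr}_2(\alpha)$, as claimed. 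The one point requiring care — and the main obstacle — is checking that the $\mu_n$-gerbe $\mathcal{G}$ appearing in Proposition \ref{prop.weilres} is genuinely the one attached to the $PGL_n$-torsor of the hypothesis, rather than some twist by $\Br(\kappa)[n]$; but Proposition \ref{prop.balanced} is stated precisely so that this ambiguity is harmless, since adding an element of $\Br(\kappa)[n]$ to $\beta$ changes neither $\mathrm{pr}_2(\beta)$ nor, by the opening reduction in the proof of Proposition \ref{prop.weilres}, the torsor $[q_*\mathcal{G}]$. Hence no further computation is needed, and the proof is the concatenation above.

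\begin{proof}
Write $\beta := \bar\alpha|_{B\mu_{n,\kappa}} \in \Br(B\mu_{n,\kappa})[n]$, which by hypothesis is the Brauer class of a $PGL_n$-torsor over $B\mu_{n,\kappa}$ whose associated Brauer--Severi stack $P \to B\mu_{n,\kappa}$ is balanced. Let $\mathcal{G} \to B\mu_{n,\kappa}$ be the $\mu_n$-banded gerbe associated to $\beta$ as in Proposition \ref{prop.weilres}. By Proposition \ref{prop.balanced}, the class in $H^1(\Spec(\kappa), \ZnZ)$ of the $\ZnZ$-torsor $[q_*P]$, with the torsor structure given by translation on characters of eigenspaces, is inverse to that of $[q_*\mathcal{G}]$. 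By Proposition \ref{prop.weilres}, the class of $[q_*\mathcal{G}]$ is $-\mathrm{pr}_2(\beta)$. Combining, the class of $[q_*P]$ is $\mathrm{pr}_2(\beta) = \mathrm{pr}_2(\bar\alpha|_{B\mu_{n,\kappa}}) = \mathrm{pr}_2(\alpha)$ in the notation of Theorem \ref{thm.residue-rootstack}.
\qed
\end{proof}
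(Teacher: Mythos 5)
Your proof is correct and is exactly the argument the paper intends: the proposition is stated as an immediate combination of Propositions \ref{prop.weilres} and \ref{prop.balanced}, with the two sign inversions cancelling to give $\mathrm{pr}_2(\bar\alpha|_{B\mu_{n,\kappa}})$. Your remark that $\mathrm{pr}_2(\alpha)$ here means $\mathrm{pr}_2(\bar\alpha|_{B\mu_{n,\kappa}})$ rather than $\res(\alpha)$, and that the $\Br(\kappa)[n]$-ambiguity in the choice of $\mathcal{G}$ is harmless, are the right points to flag.
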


As a special case of our results, we recover some classical observations
due to Artin (\cite[Thm 1.4]{ArtinLeftIdeals}, cf. the more recent \cite{BocklandtFlatLocus}).
As an example, consider the residue map for a standard conic bundle
$Q \to S$ (cf. \cite{SarkisovConicB} for a definition).
Let $\alpha \in \Br(k(S))[2]$ be the Brauer class of the bundle. 
Let $D$ be an irreducible divisor of $S$ such that $\alpha$ ramifies along $D$.
Since the conic bundle is standard, we can apply Proposition \ref{prop.brauerseveri}
to the local ring $R$ at the generic point of $D$.
Using the notation of the proposition, we have that
$\bar{\alpha}$ is the class of the smooth conic bundle $P$ over all of the root 
stack, which extends the given smooth bundle over the generic point of $\Spec(R)$.
Hence, by comparing it to the singular fiber of the bundle over the generic point of $D$,
we conclude that the residue of $\alpha$ along $D$ can
be seen geometrically as the class of the space of components over its generic point.

An analogous observation is true for standard
Brauer-Severi surface bundles (\cite{MaedaStandard}).

\bibliographystyle{hplain}
\bibliography{brres}

\begin{thebibliography}{10}

\bibitem{GromovWittenDM}
Dan Abramovich, Tom Graber, and Angelo Vistoli.
\newblock Gromov-{W}itten theory of {D}eligne-{M}umford stacks.
\newblock {\em Amer. J. Math.}, 130(5):1337--1398, 2008.

\bibitem{TwistedStable}
Dan Abramovich, Martin Olsson, and Angelo Vistoli.
\newblock Twisted stable maps to tame {A}rtin stacks.
\newblock {\em J. Algebraic Geom.}, 20(3):399--477, 2011.

\bibitem{ArtinLeftIdeals}
M.~Artin.
\newblock Left ideals in maximal orders.
\newblock In {\em Brauer groups in ring theory and algebraic geometry
  ({W}ilrijk, 1981)}, volume 917 of {\em Lecture Notes in Math.}, pages
  182--193. Springer, Berlin-New York, 1982.

\bibitem{BocklandtFlatLocus}
Raf Bocklandt, Stijn Symens, and Geert Van~de Weyer.
\newblock The flat locus of {B}rauer-{S}everi fibrations of smooth orders.
\newblock {\em J. Algebra}, 297(1):101--124, 2006.

\bibitem{CadmanTangency}
Charles Cadman.
\newblock Using stacks to impose tangency conditions on curves.
\newblock {\em Amer. J. Math.}, 129(2):405--427, 2007.

\bibitem{DeligneMumford}
P.~Deligne and D.~Mumford.
\newblock The irreducibility of the space of curves of given genus.
\newblock {\em Inst. Hautes \'Etudes Sci. Publ. Math.}, (36):75--109, 1969.

\bibitem{DeligneTH2}
Pierre Deligne.
\newblock Th\'eorie de {H}odge. {II}.
\newblock {\em Inst. Hautes \'Etudes Sci. Publ. Math.}, (40):5--57, 1971.

\bibitem{GB3}
Alexander Grothendieck.
\newblock Le groupe de {B}rauer. {III}. {E}xemples et compl\'ements.
\newblock In Masson~\& Cie, editor, {\em Dix expos\'es sur la cohomologie des
  sch\'emas}, volume~3 of {\em Adv. Stud. Pure Math.}, pages 88--188.
  North-Holland, Amsterdam, 1968.

\bibitem{Hall-Rydh}
Jack Hall and David Rydh.
\newblock General {H}ilbert stacks and {Q}uot schemes.
\newblock {\em Michigan Math. J.}, 64(2):335--347, 2015.

\bibitem{KrTschBrauerSeveri}
Andrew Kresch and Yuri Tschinkel.
\newblock Models of {B}rauer-{S}everi surface bundles, August 2017.

\bibitem{PeriodIndexBrauerArithmeticSurface}
Max Lieblich.
\newblock Period and index in the {B}rauer group of an arithmetic surface.
\newblock {\em J. Reine Angew. Math.}, 659:1--41, 2011.

\bibitem{MaedaStandard}
Takashi Maeda.
\newblock On standard projective plane bundles.
\newblock {\em J. Algebra}, 197(1):14--48, 1997.

\bibitem{MilneEtale}
James~S. Milne.
\newblock {\em \'Etale cohomology}, volume~33 of {\em Princeton Mathematical
  Series}.
\newblock Princeton University Press, Princeton, N.J., 1980.

\bibitem{PreuEffective}
Thomas Preu.
\newblock Effective lifting of 2-cocycles for {G}alois cohomology.
\newblock {\em Cent. Eur. J. Math.}, 11(12):2138--2149, 2013.

\bibitem{SarkisovConicB}
V.~G. Sarkisov.
\newblock On conic bundle structures.
\newblock {\em Izv. Akad. Nauk SSSR Ser. Mat.}, 46(2):371--408, 432, 1982.

\end{thebibliography}

\end{document}